\DeclareMathOperator{\li}{li}
\newtheorem{thm}{Theorem}[section]
\newtheorem{lem}{Lemma}[section]
\newtheorem{conj}{Conjecture}[section]
\newtheorem{exa}{Example}[section]
\newtheorem{dfn}{Definition}[section]
\newtheorem{rmk}{Remark}[section]
\newcommand{\Q}{\mathbb{Q}}
\newcommand{\R}{\mathbb{R}}
\newcommand{\C}{\mathbb{C}}
\title{Note On The Catalan Constant And Prime Triples}
\date{}
\author{N. A Carella}
\begin{document}
\maketitle
\begin{abstract}
The existence of infinitely many consecutive prime triples $p_n$, $ p_{n+1}$, and $p_{n+2}$ as $n \to \infty$, is sufficient to prove that the Catalan constant $\beta(2)=0.9159655941\ldots $ is an irrational number. This note provides the detailed analysis. Moreover, the numerical data suggests that the irrationality measure is $\mu(\beta(2))=2$, the same as almost every irrational real numbers.
\let\thefootnote\relax\footnote{ \today \date{} \\
\textit{AMS MSC}: Primary 11J72, 11M26; Secondary 11J82; 11Y60. \\
\textit{Keywords}: Irrational number; Catalan constant; Irrationality measure; Prime triple.}
\end{abstract}



\section{Introduction and the Result} \label{S2222}

The Dirichlet beta function is defined by the series
\begin{equation}\label{eq2222.100}
\beta(s)=\sum_{n \geq 1} \frac{\chi(n)}{n^s}=\prod_{p \geq 3} \left (1-\frac{\chi(p)}{p^s}\right )^{-1},
\end{equation}
where $\chi(n)$ is the quadratic symbol, and $s \in \C$ is a complex number. A beta constant $\beta(s)$ at an odd integer argument $s=2n+1$ has an exact evaluation
\begin{equation} \label{eq2222.110}
 \beta(2n+1)=\frac{\pi^{2n+1} E_{n}}{4^{n+1}(2n)!} 
\end{equation} 
in terms of the Euler numbers 
\begin{equation} \label{eq2222.120}
E_{1}=1, \quad E_{2}=5,\quad E_{4}=1385, \quad E_{n}, \ldots, 
\end{equation}
for $n \geq 1$. This formula expresses each Dirichlet beta constant $\beta(2n+1)$ as a rational multiple of $\pi^{2n+1}$, see \cite{BB1987} and related references. In contrast, the evaluation of a beta constant at an even integer argument can involves the zeta function and a power series, and other complicated formulas, \cite{JL2017}, \cite{LF2012}, et cetera. One of the simplest of these formulas is
\begin{equation} 
\beta(s)=\frac{3}{4} \zeta(s)  -2 \sum_{n \geq 1} \frac{1}{(4n+3)^s},
\end{equation}
where $\zeta(s)$ is the zeta function and $s\geq 2$. These expressions are summarized in a compact formula.

\begin{dfn} \label{dfn2222.150}  { \normalfont Let $s \geq 2$ be an integer. The $\pi$-representation of the Dirichlet beta constant $\beta(s)$ is defined by the formula
\begin{equation} \label{eq2222.150}
\beta(s)=
\begin{cases}
\displaystyle r_n \pi^{s}&\text{if $s=2n+1$},\\
\displaystyle r_n \pi^{s}  -u_n&\text{if $s=2n$},
\end{cases}
\end{equation}
where $ r_n \in \Q$ is a rational number and $u_n \in \R$ is a real number.
}
\end{dfn}
The arithmetic nature of the first even constant, called the Catalan constant, is unknown. A proof based on a result for the existence of infinitely many consecutive prime triples 
\begin{equation}\label{eq2222.190}
 p_n=8n_1+1, \quad p_{n+1}=8n_2+ 5, \quad \text{ and }\quad p_{n+2}=8n_3+ 7
\end{equation}
as the integers variables $n_1,n_2,n_3,n\to \infty$, and other results is given here.

\begin{thm} \label{thm2222.200} The Catalan constant
\begin{equation}\label{eq2222.200}
\beta(2)=\sum_{n \geq 0} \frac{(-1)^{n}}{(2n+1)^2}=0.9159655941\ldots,\nonumber
\end{equation}
is an irrational number. 
\end{thm}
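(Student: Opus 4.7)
The plan is to argue by contradiction: assume $\beta(2) \in \Q$, convert this hypothesis into an irrationality statement about an auxiliary constant via the $\pi$-representation, and then violate that statement using rational approximations manufactured from prime triples.

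First I would set up the contradiction. Suppose $\beta(2) = a/b$ with $a,b \in \Z$. By Definition \ref{dfn2222.150} at $s=2$, one writes $\beta(2) = r_1 \pi^2 - u_1$. From the displayed identity $\beta(s) = \tfrac{3}{4}\zeta(s) - 2\sum_{n \geq 1}(4n+3)^{-s}$ one reads off $r_1 = 1/8 \neq 0$ and $u_1 = 2\sum_{n \geq 1}(4n+3)^{-2}$, so that $u_1 = \pi^2/8 - a/b$. Since $\pi^2$ is irrational (indeed transcendental by Lindemann), $u_1$ is necessarily irrational under the rationality hypothesis on $\beta(2)$.

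Next I would extract arithmetic content from $u_1$ through the Euler product \eqref{eq2222.100}, split according to residue classes modulo $8$. The classes $\{1,5,7\} \pmod 8$ are precisely those appearing in \eqref{eq2222.190}, and the purpose of the splitting is to isolate a subproduct whose finite truncations are rational with denominators expressible in terms of the cutoff prime. This step re-expresses $u_1$ as a controlled limit of rationals $P_N/Q_N$, with $Q_N$ admitting an explicit upper bound in terms of the primes below the cutoff.

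The final step invokes the prime-triple hypothesis. Each consecutive triple $(p_n,p_{n+1},p_{n+2}) \equiv (1,5,7)\pmod 8$ supplies an index at which the truncated Euler product has an especially favourable numerator/denominator balance, giving rational approximants $P_N/Q_N$ to $u_1$ with $|u_1 - P_N/Q_N| \leq Q_N^{-1-\delta}$ for some $\delta > 0$. A standard irrationality criterion (Dirichlet–Liouville type) then forces $u_1 \in \Q$, contradicting the first step and thus the rationality of $\beta(2)$. The main obstacle, and the step requiring the most care, is this quantitative estimate: producing from the prime-triple condition alone a sequence of rationals with a super-linear ratio of approximation quality to denominator size. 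It reduces ultimately to a refined Mertens-type tail bound for the partial Euler product over primes in the three distinguished residue classes mod $8$, controlling simultaneously the growth of $Q_N$ and the remainder of the defining series for $u_1 - P_N/Q_N$.
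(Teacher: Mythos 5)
Your proposal does not follow the paper's route, and as written it contains two gaps that are each fatal. The first is the quantitative core you yourself flag as ``the step requiring the most care'': no refinement of Mertens-type estimates can produce approximants of quality $|u_1-P_N/Q_N|\leq Q_N^{-1-\delta}$ from truncated Euler products. If $P_N/Q_N$ is the partial product of \eqref{eq2222.100} over primes $p\leq p_N$ (in any subset of residue classes mod $8$), the tail contributes an error of order $1/(p_N\log p_N)$, while the denominator $Q_N=\prod_{p\leq p_N}(p^2-\chi(p))$ has size $e^{(2+o(1))p_N}$ by the prime number theorem. Hence $|u_1-P_N/Q_N|\asymp 1/(\log Q_N\cdot\log\log Q_N)$, which is exponentially far from $Q_N^{-1}$, let alone $Q_N^{-1-\delta}$. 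The prime-triple condition $(1,5,7)\bmod 8$ only controls the \emph{signs} $\chi(p)$ of three consecutive factors; it cannot accelerate the convergence of the product, so this estimate is unobtainable by the proposed method. The second gap is logical: even granting the estimate, the Dirichlet--Liouville criterion runs in the opposite direction. Infinitely many distinct rationals with $|u_1-P_N/Q_N|\leq Q_N^{-1-\delta}$ would prove $u_1$ \emph{irrational} (a rational $a/b$ satisfies $|a/b-p/q|\geq 1/(bq)$ for $p/q\neq a/b$); there is no criterion that deduces rationality from good approximability, so no contradiction with your first step can arise, and the detour through $u_1=\pi^2/8-\beta(2)$ gains nothing.

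For comparison, the paper's argument uses no rate of approximation at all. It applies the Brun irrationality criterion (Theorem \ref{thm9191.300}), which asks only for order-theoretic properties of a sequence of rational approximations: the convergents are the truncated Euler products $y_n/x_n=\prod_{k\leq n}p_k^2/(p_k^2-\chi(p_k))$ with $y_n=\prod_{k\leq n}p_k^2$ and $x_n=\prod_{k\leq n}(p_k^2-\chi(p_k))$, and the hypothesis of infinitely many consecutive prime triples $p_n\equiv 1$, $p_{n+1}\equiv 5$, $p_{n+2}\equiv 7 \pmod 8$ is used solely to fix the sign pattern $\chi(p_n)=\chi(p_{n+1})=1$, $\chi(p_{n+2})=-1$ needed to verify the slope-monotonicity condition (iii). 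If you want to pursue an argument in the spirit of the paper, the work to be done is verifying the three conditions of Theorem \ref{thm9191.300} for these explicit integer sequences, not constructing fast rational approximations.
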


The essential basic foundation and background are presented in Section \ref{S9191}, and the simple proof of Theorem \ref{thm2222.200} is presented in Section \ref{S4577}. 

\begin{conj} \label{conj2222.300} The irrationality measure of the Catalan constant is $\mu\left (\beta(2)\right )=2.$  
\end{conj}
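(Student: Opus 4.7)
The proposed argument proceeds in two halves. The lower bound $\mu(\beta(2)) \geq 2$ is immediate from Theorem~\ref{thm2222.200}: once $\beta(2)$ is known to be irrational, Dirichlet's approximation theorem produces infinitely many rationals $p/q$ with $|\beta(2) - p/q| < 1/q^{2}$, and this forces $\mu(\beta(2)) \geq 2$ by the very definition of the irrationality measure.

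The genuine content lies in the matching upper bound $\mu(\beta(2)) \leq 2$. My plan is to quantify the prime-triple construction underlying Theorem~\ref{thm2222.200}: each consecutive prime triple $(p_n, p_{n+1}, p_{n+2})$ in the arithmetic progressions $8n_1+1$, $8n_2+5$, $8n_3+7$ supplies a canonical rational $P_k/Q_k$ approximating $\beta(2)$. I would estimate the denominator $Q_k$ by Mertens- and Chebyshev-type bounds applied to the partial Euler product of $\beta(s)$ truncated at $p_{n+2}$, and estimate the error $|\beta(2) - P_k/Q_k|$ by combining the tail of the alternating series with the character-sum cancellation carried by the quadratic symbol $\chi$. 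The target is the two-sided asymptotic $|\beta(2) - P_k/Q_k| \asymp Q_k^{-2}$, asserting that the prime-triple approximants already saturate Dirichlet's bound and leave no room for a denser sequence of exceptionally good rationals.

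The crucial step, and the real obstacle, is then a gap principle ruling out any hypothetical rational $p/q$ with $|\beta(2) - p/q| < q^{-2-\varepsilon}$. Given such a $p/q$, one would select a prime-triple approximant $P_k/Q_k$ whose denominator sits in a carefully chosen window comparable to $q$ and apply the triangle inequality in the form
\begin{equation*}
\left|\frac{p}{q} - \frac{P_k}{Q_k}\right| \geq \frac{1}{q\, Q_k},
\end{equation*}
deriving a contradiction with the sharp two-sided bound above. For such a window always to be available one needs a quantitative strengthening of the prime-triple existence statement used in Section~\ref{S4577}, essentially a Hardy--Littlewood-type density count valid on every scale. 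That sieve-theoretic input is the decisive difficulty and the reason the present statement is posed as a conjecture rather than a theorem; the proposal should accordingly be read as a conditional road map rather than a complete proof.
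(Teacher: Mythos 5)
The statement you are addressing is posed in the paper as a conjecture, and the paper offers no proof of it: its only support is the numerical computation in Section~\ref{S3300N}, where the first $45$ continued-fraction convergents of $\beta(2)$ are used to evaluate the finite approximations $\mu_n(\beta(2))$ of \eqref{eq5555N.500}, all of which cluster near $2$. So there is no argument in the paper to compare yours against. Your lower-bound half ($\mu(\beta(2))\geq 2$ follows from irrationality via Dirichlet's approximation theorem) is standard and correct, modulo the fact that the irrationality itself rests on Theorem~\ref{thm2222.200}, whose proof in the paper is conditional.

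The upper-bound half of your road map, however, contains a step that would fail, and the failure is more basic than the sieve-theoretic difficulty you flag at the end. The approximants $P_k/Q_k$ coming from the truncated Euler product $\prod_{p \leq p_N}\left(1-\chi(p)p^{-2}\right)^{-1}$ cannot satisfy the two-sided bound $|\beta(2)-P_k/Q_k| \asymp Q_k^{-2}$ that your gap principle requires. The truncation error is governed by the tail $\sum_{p>p_N}\chi(p)p^{-2}$, which is of size about $1/(p_N\log p_N)$; even assuming the best conceivable character-sum cancellation you cannot push it below a fixed negative power of $p_N$. Meanwhile the denominator $Q_k=\prod_{p\leq p_N}\left(p^2-\chi(p)\right)$ has size $\exp\left((2+o(1))p_N\right)$ by the prime number theorem. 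Hence $|\beta(2)-P_k/Q_k|$ is roughly $1/(\log Q_k\cdot\log\log Q_k)$, astronomically larger than $Q_k^{-2}$: these are among the worst rational approximations to $\beta(2)$ relative to the size of their denominators, which is precisely why the Brun criterion used in Section~\ref{S4577} is formulated in terms of monotonicity rather than approximation quality. A triangle-inequality comparison of a hypothetical very good $p/q$ against such a $P_k/Q_k$ therefore yields no contradiction, and no quantitative strengthening of the prime-triple counts can repair this, since the defect is in the approximants themselves. Any genuine attack on $\mu(\beta(2))\leq 2$ would need a different family of dense, near-optimal approximants --- in practice explicit linear forms in $1$ and $\beta(2)$ with controlled denominators in the Ap\'ery--Zudilin style --- and no such construction is currently known; this is why the statement is a conjecture, supported only by the metric fact that almost every real number has irrationality measure $2$ together with the paper's numerical table.
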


The experimental data compiled in Section \ref{S3300N} confirms the prediction very accurately.\\

Last, but not least, it should be observed that the same proof seamlessly generalizes to all the beta constants $\beta(s)$, where $s\geq2$ is an integer. For example, the same argument given in Section \ref{S4577} provides a new proof of the irrationality of the number $\beta(3)=\prod_{p \geq 3} \left (1-\chi(p)p^{-3}\right )^{-1}=\pi^3/32$.\\

Now, recall that an elementary argument based on the irrationality of the number $\pi^2/6=\prod_{p \geq 2} \left (1-p^{-2}\right )^{-1}$ implies the existence of infinitely many primes. Similarly, if the converse of the Brun irrationality criterion holds, see Theorem \ref{thm9191.300}, then the known irrationality of the number $\pi^3/32=\prod_{p \geq 3} \left (1-\chi(p)p^{-3}\right )^{-1}$ implies the existence of infinitely many prime triples, see \eqref{eq2222.190}. Assuming the consecutive prime triples are on three dependent arithmetic progressions, for example $n_1=n_2=n_3$ in \eqref{eq2222.190}, this is a stronger result than the twin primes conjecture. However, this is  open problem closely related to the Bateman-Horn conjecture, see \cite{FG2028} and \cite{LS2013} for more information.  
\section{Foundation}\label{S9191}
The conditional result for the Catalan constant, and more generally, the Dirichlet beta constants, is based on a proven irrationality criterion, and the conjectured distribution of prime triples. 

\subsection{Irrationality Criterion}
\begin{thm} \label{thm9191.300} {\normalfont (Brun irrationality criterion)} Let $x_n\geq 1 $ and $y_n\geq 1 $ be a pair of monotonic increasing increasing integers sequences. Suppose that the following properties are true.

\begin{enumerate}[font=\normalfont, label=(\roman*)]
\item $\displaystyle \frac{y_n}{x_n}$ converges to a real number $\alpha\ne0$ as $n\to\infty$.

\item $\displaystyle \frac{y_n}{x_n}< \frac{y_{n+1}}{x_{n+1}}$ is monotonically increasing as $n\to\infty$.

\item $\displaystyle \frac{y_{n+1}-y_n}{x_{n+1}-x_n}> \frac{y_{n+2}-y_{n+1}}{x_{n+2}-x_{n+1}}$ is monotonically decreasing as $n\to\infty$.
\end{enumerate}

Then, the number $\alpha$ is irrational.
\end{thm}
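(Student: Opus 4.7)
The plan is to argue by contradiction: if $\alpha = a/b$ with positive integers $a,b$ were rational, then hypotheses (i)--(iii) will force a sequence of positive integers to decrease by at least $1$ at every step, which is impossible. Throughout the plan I write $p_n := y_n/x_n$ and $q_n := (y_{n+1}-y_n)/(x_{n+1}-x_n)$.

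First I would establish the strict two-sided sandwich $p_n < \alpha < q_n$ for every $n$. The lower bound is immediate from (i) and (ii): a strictly increasing sequence with limit $\alpha$ satisfies $p_n < \alpha$. For the upper bound I would invoke the elementary ``reverse mediant'' inequality: if $y_n/x_n < y_{n+1}/x_{n+1}$ with $x_{n+1} > x_n$, cross-multiplication gives $q_n > y_{n+1}/x_{n+1} = p_{n+1}$. Combined with (iii), which makes $q_n$ strictly decreasing, the limit $L := \lim_{m\to\infty} q_m$ exists and satisfies $L \geq \alpha$; strict monotonicity then yields $q_n > q_{n+1} \geq L \geq \alpha$.

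Next, assuming $\alpha = a/b$, I would introduce the integer quantities $A_n := a x_n - b y_n$ and $B_n := b(y_{n+1}-y_n) - a(x_{n+1}-x_n)$. The strict sandwich forces $A_n \geq 1$ and $B_n \geq 1$ for all $n$. A direct expansion reveals the telescoping identity $A_{n+1} = A_n - B_n$, so iteration yields $A_{n+1} = A_1 - (B_1 + B_2 + \cdots + B_n) \leq A_1 - n$. Choosing $n \geq A_1$ violates $A_{n+1} \geq 1$, which is the desired contradiction.

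The main point of care will be securing the \emph{strict} inequalities on both sides of $\alpha$: a single equality $A_n = 0$ or $B_n = 0$ at even one index would break the descent. Hypotheses (ii) and (iii), which give strict rather than weak monotonicity, are calibrated precisely to deliver strictness via the limit argument above, so once the sandwich is in place the integer-descent step is essentially automatic.
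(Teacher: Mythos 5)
Your argument is correct and complete. Note first that the paper does not actually prove Theorem \ref{thm9191.300}; it defers entirely to the reference \cite{BK1972}, so there is no in-paper proof to compare against. Your proof is the standard self-contained one (and is in the same spirit as the Brun--Knudsen lattice-point argument): the sandwich $y_n/x_n < \alpha < (y_{n+1}-y_n)/(x_{n+1}-x_n)$ is established correctly --- the lower bound from (i)--(ii), the upper bound from the mediant identity $q_n > p_{n+1}$ (valid since $x_{n+1}>x_n$) combined with the strict decrease in (iii) --- and the identity $A_{n+1}=A_n-B_n$ with $A_n, B_n \geq 1$ then gives the impossible infinite descent. The only points worth flagging are housekeeping: you should note that $\alpha>0$ (so that $a,b$ may be taken positive), which follows from $x_n,y_n\geq 1$ and $\alpha\neq 0$, and that the hypotheses implicitly require $x_{n+1}>x_n$ strictly (otherwise the difference quotients in (iii) are undefined); both are immediate in context. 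With those remarks your write-up stands as a full proof of the criterion, which is arguably a service the paper itself does not render.
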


The details of the proof are discussed in \cite{BK1972}, and the most recent application of this result is given in \cite{BL2015}.

\subsection{Sequences of Consecutive Primes }
Let $p_1,p_2,p_3, \ldots, p_n, \ldots $ be the sequence of primes in increasing order. Let $q\geq1$ be an integer, and let ${\bf{a}}=(a_1,a_2,\ldots,a_k)$ be an $k$-tuple of congruences $p_n\equiv a_n \bmod q$, with $\gcd(a_n,q)=1$ for $n\in \{1,2,\ldots,k\}$. Define the primes counting function  
\begin{equation}\label{eq9191.200}
\pi(x,q,{\bf{a}})=\#\{p\leq x: p_n\equiv a_n \bmod q\}.
\end{equation}
A few results have been proved for the constant case
\begin{equation}\label{eq9191.210}
{\bf{a} }=(a_1=a,a_2=a,\ldots,a_k=a),
\end{equation}
where $\gcd(a,q)=1$, see \cite{BT2013}, \cite{LS2013}, et cetera, for the most recent literature. The nonconstant case of \eqref{eq9191.210} for $k$ dependent arithmetic progressions appears to be a difficult problem. But, for $k$ independent arithmetic progression, this problem is manageable.

\begin{thm} \label{thm9191.500} Let $x\geq 1 $ and $q\leq \log x $ be an integer. If $ {\bf{a}}=(a_1,a_2,\ldots,a_k)$ is a congruence vector such that $\gcd(a_n,q)=1$ for $n=1,2,\ldots ,k$, then, 
\begin{equation}\label{eq9191.500} 
\pi(x,q,{\bf{a}})=\frac{\li(x)}{\varphi(q)^k}\left (1+O\left ( \frac{1}{\log x}\right ) \right ).\nonumber
\end{equation}
\end{thm}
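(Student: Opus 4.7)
The plan is to reduce the multi-dimensional prime counting problem to $k$ applications of the prime number theorem in arithmetic progressions, and then exploit the independence of the arithmetic progressions encoded by the vector $\bf{a}$.

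First, I would invoke the Siegel-Walfisz theorem on each coordinate separately. For any single residue class $a$ coprime to $q$, with $q$ in the Siegel-Walfisz range (which certainly includes $q\leq\log x$), we have
\begin{equation*}
\pi(x;q,a)=\frac{\li(x)}{\varphi(q)}\left(1+O\left(\frac{1}{\log x}\right)\right),
\end{equation*}
uniformly in $a$ and $q$. This handles the base case $k=1$ of the claim and provides the fundamental building block from which the main term $\li(x)/\varphi(q)$ emerges.

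Next, I would invoke the independence hypothesis on the $k$ arithmetic progressions to handle the joint event. Under independence, the probability that the $j$-th prime in the configuration lies in residue class $a_j \bmod q$ is asymptotically $1/\varphi(q)$ for each $j$, so the joint density factors as $1/\varphi(q)^k$. Multiplying by $\li(x)$ gives the claimed main term, and accumulating the $k$ error contributions, each of size $O(1/\log x)$, yields a combined error of the same order since $k$ is fixed. The factorization itself would be made precise by expanding each indicator $\mathbf{1}_{p\equiv a_j \bmod q}$ through the orthogonality relations for Dirichlet characters modulo $q$, so that the product over the $k$ coordinates decomposes into a principal-character contribution (which delivers the factor $\varphi(q)^{-k}$) plus cross terms involving non-principal characters.

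The main obstacle will be bounding the non-principal cross terms uniformly. Concretely, one needs that the character-sum over primes
\begin{equation*}
\sum_{p\leq x}\chi(p)=O\!\left(\frac{x}{\log^A x}\right)
\end{equation*}
holds uniformly for every non-principal $\chi$ modulo $q$, which is exactly what Siegel-Walfisz supplies provided $q\leq(\log x)^A$; the hypothesis $q\leq\log x$ is therefore the binding constraint. Once these non-principal contributions are absorbed into $O(1/\log x)$, only the principal character survives in each of the $k$ coordinates, and the independence of the $k$ arithmetic progressions ensures that no additional correlation arises between distinct consecutive primes in the configuration, producing the stated asymptotic.
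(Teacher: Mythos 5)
Your reduction via Dirichlet characters is the right formal skeleton, but the step ``invoke the independence hypothesis on the $k$ arithmetic progressions'' is where the argument fails, and it fails in an essential way. The quantity $\pi(x,q,\mathbf{a})$ counts \emph{consecutive} primes $p_n,p_{n+1},\ldots,p_{n+k-1}$ carrying the prescribed residues (this is forced by the shape of the main term: roughly $\li(x)$ choices of starting index times a factor $\varphi(q)^{-k}$; for $k$ unconstrained primes the count would instead be of order $\li(x)^k/\varphi(q)^k$). Independence of the residues of consecutive primes is not a hypothesis available to be invoked --- it is the entire content of the theorem. When you expand the product of indicators $\prod_{j}\mathbf{1}_{p_{n+j}\equiv a_{j+1} \bmod q}$ by orthogonality, the non-principal cross terms are not single-variable sums $\sum_{p\le x}\chi(p)$, which Siegel--Walfisz does control, but correlation sums of the form $\sum_{n\le N}\chi_1(p_n)\chi_2(p_{n+1})\cdots\chi_k(p_{n+k-1})$ taken along the sequence of consecutive primes. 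No unconditional cancellation is known for such sums for any $k\ge 2$: even the purely qualitative statement that every admissible pattern $\mathbf{a}$ occurs infinitely often among consecutive primes required the Maynard--Tao sieve machinery (see \cite{BT2013}) and yields only lower bounds of the shape $\gg x/(\log x)^{c}$, far from an asymptotic with a power-saving relative error.

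The precise form of the claim is also in tension with the literature the paper itself cites: Lemke Oliver and Soundararajan \cite{LS2013} give strong evidence that residues of consecutive primes are \emph{not} independent, with conjectural secondary terms of relative size $(\log\log x)/\log x$, which would already violate the asserted error term $O(1/\log x)$ even if the leading constant $\varphi(q)^{-k}$ is correct. To be fair to you, the paper's own proof commits exactly the same leap --- it asserts that ``each prime in the consecutive prime $k$-tuple is independently generated'' and multiplies densities --- so your proposal reproduces the paper's argument together with its gap. As stated, Theorem~\ref{thm9191.500} for $k\ge 2$ is an open conjecture rather than a theorem, and neither your character-orthogonality route nor the paper's density heuristic closes it.
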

\begin{proof}[\textbf{Proof}] Let $x\geq1$ be a large number, and let $c\geq0$ be a constant. Fix a modulo $q\ll (\log x)^c$, and an admissible $k$-tuple $a_1,a_2, \ldots,a_k$ such that $\gcd(q,a_n)=1$ for $n\leq k$, take the cross product of $k$ independent arithmetic progressions
\begin{equation}\label{eq9191.510} 
qn_1+a_1, \quad qn_2+a_2, \quad \ldots, \quad qn_k+a_k.	 
\end{equation}	
By Dirichlet theorem (or Siegel-Walfisz theorem), the corresponding prime $k$-tuples has the natural density 
\begin{equation}\label{eq9191.520} 
\delta(\textbf{a})=\lim_{x \to\infty}\frac{\#\{p\leq x: p_n\equiv a_n \bmod q\}}{x}=\frac{1}{\varphi(q)^k}.	 
\end{equation}	
Each prime in the consecutive prime $k$-tuple $p_n, p_{n+1}, \ldots, p_{n+k}$ is independently generated, but satisfies the specified congruence $p_{n+i}\equiv a_{n+i} \bmod q$.
\end{proof}As a new application, the sequence of prime triples 
\begin{equation}\label{eq9191.530} 
(97,101,103),\quad(193,197, 199),\quad (457,459,463),\ldots, 
\end{equation}
defined in \eqref{eq2222.190}, is used here to develop an argument for the irrationality of the beta constant $\beta(s)$.  
\section{An Irrationality Result} \label{S4577}
The simple argument in support of Theorem \ref{thm2222.200} is the following. 

\begin{proof}[\textbf{Proof}] {\bf Theorem \ref{thm2222.200}} Suppose that 
\begin{equation}\label{eq4577.810}
 p_n\equiv 1 \bmod 8, \quad p_{n+1}\equiv 5 \bmod 8, \quad \text{ and }\quad p_{n+2}\equiv 7 \bmod 8.
\end{equation} 

This hypothesis immediately implies that
\begin{equation}\label{eq4577.820}
 \chi(p_n)= 1, \quad \chi(p_{n+1})=1, \quad \text{ and }\quad\chi(p_{n+2})=-1.
\end{equation} 
By Theorem \ref{thm9191.500}, there are infinitely many consecutive prime triples \eqref{eq4577.810} that satisfy the triple character values \eqref{eq4577.820} as $n \to \infty$. The rest of the proof verifies the three steps specified in Theorem \ref{thm9191.300} to prove the irrationality of the number $G=\beta(2)$. \\

\textbf{Condition (i): Convergence Property.} Define the sequence of rational approximations
\begin{equation}\label{eq4577.830}
\frac{y_{n}}{x_{n}}=\prod_{k \leq n} \left (1-\frac{\chi(p_k)}{p_k^2}\right )^{-1}=\prod_{k \leq n} \frac{p_k^2}{p_k^2-\chi(p_k)}.
\end{equation}
Since $\chi(p_n)=1$, the sequence $\{y_n/x_n:n\geq 1\}$ is composed of the two sequences of monotonically increasing integers
\begin{equation}\label{eq4577.840}
x_n= \prod_{k \leq n}\left (p_k^2-\chi(p_k)\right ), \quad \text{ and } \quad y_{n}=\prod_{k \leq n} p_k^2.
\end{equation}
This shows that the sequence of rational approximations $y_n/x_n$ converges to $\beta(2)=0.9159655941\ldots$ as $n\to \infty$, see \eqref{eq2222.100}. This step verifies Theorem \ref{thm9191.300}-i. \\

\textbf{Condition (ii): Monotonically Increasing Ratios $y_n/x_n$}. Utilizing the hypothesis \eqref{eq4577.820}, a pair of consecutive ratios have the forms
\begin{equation}\label{eq4577.850}
\frac{y_{n}}{x_{n}}=\prod_{k \leq n} \frac{p_k^2}{p_k^2-\chi(p_k)}
=\left (\frac{p_n^2}{p_n^2-1}\right)\prod_{k \leq n-1} \frac{p_k^2}{p_k^2-1},
\end{equation}
and 
\begin{equation}\label{eq4577.860}
\frac{y_{n+1}}{x_{n+1}}=\prod_{k \leq n+1} \frac{p_k^2}{p_k^2-\chi(p_k)}=\left (\frac{p_{n+1}^2}{p_{n+1}^2-1}\right)\left (\frac{p_n^2}{p_n^2-1}\right)\prod_{k \leq n-1} \frac{p_k^2}{p_k^2-1}.
\end{equation}
Clearly, this is a monotonically increasing sequence:
\begin{equation}\label{eq4577.870}
\frac{y_{n}}{x_{n}}
=\left (\frac{p_n^2}{p_n^2-1}\right)\prod_{k \leq n-1} \frac{p_k^2}{p_k^2-1}< \frac{y_{n+1}}{x_{n+1}}=\left (\frac{p_{n+1}^2}{p_{n+1}^2-1}\right)\left (\frac{p_n^2}{p_n^2-1}\right)\prod_{k \leq n-1} \frac{p_k^2}{p_k^2-1}.
\end{equation}
This step verifies Theorem \ref{thm9191.300}-ii. \\
 
\textbf{Condition (iii): Monotonically Decreasing Slopes}. A pair of consecutive slopes are computed using the hypothesis \eqref{eq4577.820}. \\ 

The first ratio of shifted differences (slope, see  \cite[Figure 1]{BK1972} for a graphical description of this sequence) is
\begin{equation}\label{eq4577.920}
\frac{y_{n+1}-y_{n}}{x_{n+1}-x_{n}}=\frac{\left (p_{n+1}^2-1 \right ) }{\left ((p_{n+1}^2-1)-1 \right ) }\frac{y_{n}}{x_{n}}=\frac{\left (p_{n+1}^2-1 \right ) }{\left (p_{n+1}^2-2 \right ) }\frac{y_{n}}{x_{n}},
\end{equation}
where $\chi(p_{n+1})=1$. \\

The next ratio of shifted differences is
\begin{eqnarray}\label{eq4577.930}
\frac{x_{n+2}-x_{n+1}}{y_{n+2}-y_{n+1}}&=&\frac{\left (p_{n+2}^2p_{n+1}^2-p_{n+1}^2 \right ) }{\left ((p_{n+2}^2+1)(p_{n+1}^2-1)-(p_{n+1}^2-1) \right ) }\frac{y_{n}}{x_{n}}\\
&=&\frac{\left (p_{n+2}^2-1 \right )p_{n+1}^2 }{\left ((p_{n+2}^2+1)-1 \right )(p_{n+1}^2-1) }\frac{y_{n}}{x_{n}}\nonumber,
\end{eqnarray}
where $\chi(p_{n+2})=-1$.\\

Comparing a pair of consecutive ratios yields
\begin{eqnarray}\label{eq4577.940}
\frac{x_{n+1}-x_{n}}{y_{n+1}-y_{n}}&=&\frac{\left (p_{n+1}^2-1 \right ) }{\left (p_{n+1}^2-2 \right ) }\frac{y_{n}}{x_{n}}\\
&>&\frac{x_{n+2}-x_{n+1}}{y_{n+2}-y_{n+1}}
=\frac{\left (p_{n+2}^2-1 \right )p_{n+1}^2 }{p_{n+2}^2(p_{n+1}^2-1)  }\frac{y_{n}}{x_{n}}\nonumber.
\end{eqnarray}
Equivalently,
\begin{equation}\label{eq4577.950}
\frac{\left (p_{n+1}^2-1 \right ) }{\left (p_{n+1}^2-2 \right ) }>\frac{\left (p_{n+2}^2-1 \right ) p_{n+1}^2}{p_{n+2}^2(p_{n+1}^2-1) }.
\end{equation}
Expanding and simplifying it return
\begin{equation}\label{eq4577.960}
p_{n+1}^4
 >p_{n+1}^2 .
\end{equation}
Therefore, the slope \eqref{eq4577.920} is a strictly monotonically decreasing sequence. This step verifies Theorem \ref{thm9191.300}-iii. \\ 

Therefore, since all the conditions of Theorem \ref{thm9191.300} are satisfied, the number $\beta(2)$ is irrational.
\end{proof}

\begin{rmk}{\normalfont As stated in Section \ref{S9191}, there is a proof for the existence of infinitely many consecutive prime triples of constant congruences $p_n\equiv  p_{n+1}\equiv  p_{n+2}\equiv 1 \bmod 4$ and $p_n\equiv  p_{n+1}\equiv  p_{n+2}\equiv -1 \bmod 4$ on arithmetic progressions of a single variable. However, the same argument fails for any of these sequences of consecutive prime triples. For example, the same analysis as above using the sequence 
\begin{equation}\label{eq4577.600}
 p_n\equiv 1 \bmod 16, \quad p_{n+1}\equiv 5 \bmod 16, \quad \text{ and }\quad p_{n+2}\equiv 9 \bmod 16
\end{equation}
of constant quadratic symbol 
\begin{equation}\label{eq4577.610}
 \chi(p_n)= 1 , \quad \chi(p_{n+1})=1, \quad \text{ and }\quad \chi(p_{n+2})=1,
\end{equation}
or using the sequence 
\begin{equation}\label{eq4577.620}
 p_n\equiv 3 \bmod 16, \quad p_{n+1}\equiv 7 \bmod 16, \quad \text{ and }\quad p_{n+2}\equiv 11 \bmod 16,
\end{equation}
of constant quadratic symbol 
\begin{equation}\label{eq4577.630}
 \chi(p_n)= -1 , \quad \chi(p_{n+1})=-1, \quad \text{ and }\quad \chi(p_{n+2})=-1,
\end{equation}
fails to prove that $\beta(2)$ is irrational.
}
\end{rmk}

\section{Basic Diophantine Approximations } \label{S3300T}
The basic results recorded below are standard results in the literature, see \cite{RD1996}, \cite{RH1994}, et alii. 

\subsection{Basic Continued Fractions} \label{S3300TA}
\begin{lem} \label{lem3300T.100} Let $\alpha=\left [ a_0, a_1, \ldots, a_n, \ldots, \right ]$ be the continue fraction of the real number $\alpha \in \R$. Then, the following properties hold.
\begin{enumerate} [font=\normalfont, label=(\roman*)]
\item$ \displaystyle  p_n=a_np_{n-1}+p_{n-2},$ \tabto{6cm} $p_{-2}=0, \quad p_{-1}=1$, \quad for all $n\geq 0.$
\item$ \displaystyle  q_n=a_nq_{n-1}+q_{n-2},$ \tabto{6cm} $q_{-2}=1, \quad q_{-1}=0$, \quad for all $n\geq 0.$
\item$ \displaystyle  p_nq_{n-1}-p_{n-1}q_{n}=(-1)^{n-1},$ \tabto{6cm} for all $n\geq 1.$
\item$ \displaystyle  \frac{p_n}{q_{n}}=a_0+\sum_{0 \leq k < n}\frac{(-1)^{k}}{q_kq_{k+1}},$ \tabto{6cm} for all $n\geq 1.$

\end{enumerate}
\end{lem}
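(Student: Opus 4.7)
All four assertions are classical, and the plan is to prove them in the order given, each reducing to a short induction once its predecessors are available. The naming clash with the primes earlier in the paper is purely notational; here $p_n$ and $q_n$ denote the numerators and denominators of the convergents $[a_0;a_1,\ldots,a_n]$.

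Parts (i) and (ii) I will handle by simultaneous induction on $n$, viewing the fractions defined by the recurrences as the successive convergents. The initial conventions $p_{-2}=0$, $p_{-1}=1$, $q_{-2}=1$, $q_{-1}=0$ are chosen precisely so that the recurrences produce $p_0=a_0$, $q_0=1$, $p_1=a_0a_1+1$, $q_1=a_1$, which agree with $[a_0]$ and $[a_0;a_1]$. For the inductive step I will use the identity
\[
[a_0;a_1,\ldots,a_{n-1},a_n] \;=\; [a_0;a_1,\ldots,a_{n-1}+1/a_n],
\]
apply the induction hypothesis at level $n-1$ with the last partial quotient replaced by $a_{n-1}+1/a_n$, and clear the resulting compound fraction. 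Collecting numerator and denominator separately yields $p_n=a_n p_{n-1}+p_{n-2}$ and $q_n=a_n q_{n-1}+q_{n-2}$ verbatim.

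Part (iii) then follows from a one-line induction using (i) and (ii):
\[
p_n q_{n-1}-p_{n-1}q_n \;=\; (a_n p_{n-1}+p_{n-2})q_{n-1}-p_{n-1}(a_n q_{n-1}+q_{n-2}) \;=\; -\bigl(p_{n-1}q_{n-2}-p_{n-2}q_{n-1}\bigr),
\]
so the quantity negates at every step. Combined with the base case $p_1 q_0 - p_0 q_1 = (a_0 a_1+1)\cdot 1 - a_0\cdot a_1 = 1 = (-1)^{0}$, the formula for general $n$ is immediate.

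Part (iv) is obtained by telescoping (iii). Dividing through by $q_n q_{n-1}$ gives
\[
\frac{p_n}{q_n}-\frac{p_{n-1}}{q_{n-1}} \;=\; \frac{(-1)^{n-1}}{q_{n-1}q_n},
\]
and summing from $k=1$ to $n$ collapses the left-hand side to $p_n/q_n-p_0/q_0 = p_n/q_n - a_0$; after reindexing $k\mapsto k+1$ on the right, the stated identity drops out. The only genuine point of care in the whole argument is bookkeeping with the negative indices $n=-1,-2$ so that the recurrences reproduce the correct values at $n=0,1$; beyond that every step is formal algebra, and I do not expect any serious obstacle.
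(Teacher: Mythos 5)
Your proof is correct and is the standard textbook argument (simultaneous induction for the recurrences via the identity $[a_0;a_1,\ldots,a_{n-1},a_n]=[a_0;a_1,\ldots,a_{n-1}+1/a_n]$, a one-step sign-flip induction for the determinant identity, and telescoping for the alternating-sum formula). The paper itself gives no proof of this lemma --- it merely cites \cite{RD1996} and \cite{RH1994} --- so your write-up simply supplies the standard derivation the paper defers to the literature, and I see no gaps in it.
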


\subsection{The Irrationality Measure } \label{S3300TB}
The irrationality measure measures the quality of the rational approximation of an irrational number. 
It is lower bound of all the rational approximations. Specifically,
\begin{equation}\label{eq3300T.130}
 \left |\alpha -\frac{p}{q} \right |\gg  \frac{1}{q^{\mu}}
\end{equation}
for all sufficiently large $q\ge1$.

\section{Numerical Data} \label{S3300N}
The data to support the claim in Conjecture \ref{conj2222.300} is compiled in this section. The continued fraction of the Catalan constant 
\begin{equation}\label{eq3300N.200}
\beta(2)=0.91596559417721901505460351493238411077414937428167\ldots,
\end{equation}
is of the form, (first 100 partial quotients $a_n$),
\begin{eqnarray}\label{eq3300N.210}
\beta(2)&=&[0; 1, 10, 1, 8, 1, 88, 4, 1, 1, 7, 22, 1, 2, 3, 26, 1, 11, 1, 10, 1, 9, 3, 1, 1, 1, 1,\nonumber\\&& 1, 1, 2, 2, 1, 11, 1, 1, 1, 6, 1, 12, 1, 4, 7, 1, 1, 2, 5, 1, 5, 9, 1, 1, 1, 1, 33, 4, 1, \nonumber\\&& 1, 3, 5, 3, 2, 1, 2, 1, 2, 1, 7, 6, 3, 1, 3, 3, 1, 1, 2, 1, 14, 1, 4, 4, 1, 2, 4, 1, 17, 4, \nonumber\\&&1, 14, 1, 1, 1, 12, 1, 1, 1, 3, 1, 2, 3, 1, 6, 2, 1, 2, 2, 322, 1, 1, 1, 2, 1, 108, 3, 1,\nonumber\\&& 2, 82, 1, 5, 4, 1, 2, 2, 1, 1, 1, 5, 1, 12, 2, 11, 8, 2, 17, 1, 11, 1, 6, 1, 18, 1, 5, 2, \nonumber\\&&24, 4, 1, 1, 1, 8, 4, 3, 8, 3, \ldots].  
\end{eqnarray}
These are archived as sequence A006752 and sequence A014538, respectively, on the OEIS.\\

The sequence of convergents $\{p_n/q_n: n \geq 1\}$, listed in Table \ref{t2000.500}, is computed via the recursive formula provided in the Lemma \ref{lem3300T.100}. \\

An approximation $\mu_n(\alpha)$ of the irrationality measure satisfies the inequality
\begin{equation} \label{eq3300N.230}
  \left | \alpha-\frac{p_n}{q_n} \right | 
\geq\frac{1}{q^{\mu_n(\alpha) }}
\end{equation}
for $n\geq 2$. The values of the approximate irrationality measure $\mu_n(\alpha)\geq2$ of the irrational number $\alpha\ne0$ is defined by
\begin{equation}\label{eq5555N.500}
\mu_n(\alpha)=-\frac{\log \left |\alpha-p_n/q_n\right | }{\log q_n}, 
\end{equation}
where $n\geq 2$. 

\begin{exa}{\normalfont A large convergent is used here to illustrate the calculations, using 50 digits accuracy in the computer algebra system SAGE. The 100th convergent $p_n/q_n$ is given by
\begin{enumerate}
\item[(a)] $ \displaystyle p_{100}=
24078868662746347429760476964387436156348637833,$
\item[(b)] $ \displaystyle q_{100}=26287961923259336649196821919541159881600485419
.$
\end{enumerate} 
The corresponding $100$th approximation of the irrationality measure is
\begin{eqnarray}\label{eq5555N.510}
\mu_{100}(\beta(2))&=&-\frac{\log \left |\beta(2)-p_{100}/q_{100}\right | }{\log q_{100}}\\&=&2.0098375679109850809407389673548425452383093096668\nonumber. 
\end{eqnarray} 
}
\end{exa} 
The range of values for $n\leq 45$ is plotted in Figure \ref{fi3300N.500}. 
\begin{figure}[h]
\caption{Approximate Irrationality Measure $\mu_n(\beta(2))$ Of The Number $\beta(2).$} \label{fi3300N.500}\centering%
  \begin{tikzpicture}
	\begin{axis}[
		xlabel=$n$,
		ylabel=$\mu_n(\beta(2))$,
width=0.95\textwidth,
       height=0.5\textwidth		]
	\addplot[color=blue,mark=square] coordinates {
(2,2.07678354372)
(3,2.92280567895)
(4,2.02491750642)
(5,2.93949885584)
(6,2.16337045430)
(7,2.07073791493)
(8,2.06103252758)
(9,2.17687224408)
(10,2.23191137105)
(11,2.02341311597)
(12,2.07153881185)
(13,2.06867467056)
(14,2.17463739738)
(15,2.00518561271)
(16,2.11498233047)
(17,2.00653757661)
(18,2.09965407158)
(19,2.00668821796)
(20,2.08514590463)
(21,2.04442502885)
(22,2.02162846316)
(23,2.02785664628)
(24,2.02499038253)
(25,2.02500515325)
(26,2.02579968126)
(27,2.02174828551)
(28,2.03244636822)
(29,2.03473350242)
(30,2.01160242732)
(31,2.07058476506)
(32,2.01265439798)
(33,2.02693536878)
(34,2.01313249833)
(35,2.05164524679)
(36,2.00499959412)
(37,2.06337796886)
(38,2.00633061958)
(39,2.03693255654)
(40,2.04514224382)
(41,2.01265617595)
(42,2.01784883868)
(43,2.02062399817)
(44,2.03721810851)
(45,2.00635311938)
		
	};
	\addplot[color=black,mark=square] coordinates {
		(2,2)
		(3,2)
		(4,2)
		(5,2)
		(6,2)
		(7,2)
		(8,2)
		(9,2)
		(10,2)
		(11,2)
		(12,2)
		(13,2)
		(14,2)
		(15,2)
		(16,2)
		(17,2)
		(18,2)
		(19,2)
		(20,2)
		(21,2)
		(22,2)
		(23,2)
		(24,2)
		(25,2)
		(26,2)
		(27,2)
		(28,2)
		(29,2)
		(30,2)
		(31,2)
		(32,2)
		(33,2)
		(34,2)
		(35,2)
		(36,2)
		(37,2)
		(38,2)
		(39,2)
		(40,2)
		(41,2)
		(42,2)
		(43,2)
		(44,2)
		(45,2)
	};
	\end{axis}
\end{tikzpicture}	
\end{figure}
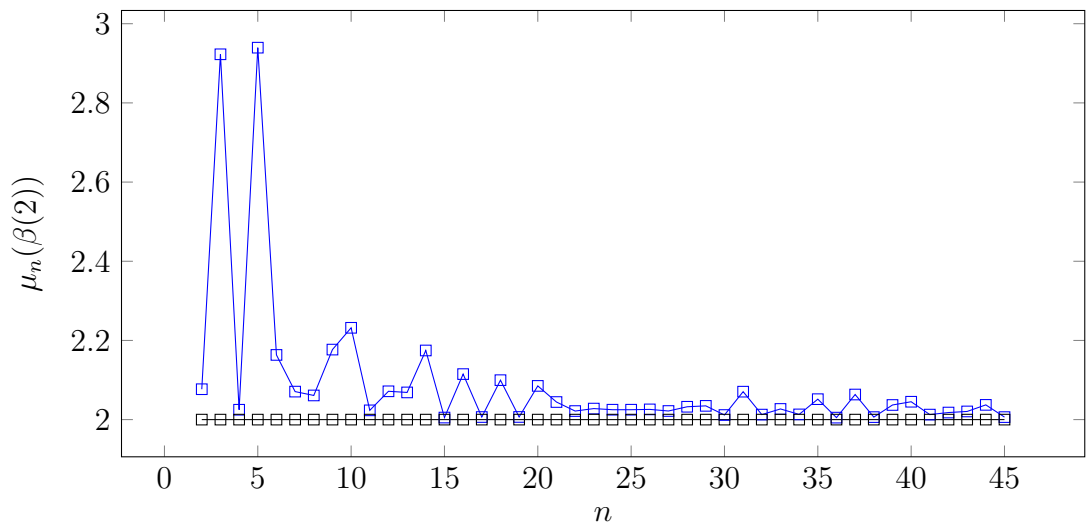


\begin{table}[h!]
\centering
\caption{Numerical Data For The Exponent $\mu(\beta(2))$ Of The Number $\beta(2)$} \label{t2000.500}
\begin{tabular}{l|l|l| l}
$n$&$p_n$&$q_n$&$\mu_n(\beta(2))$\\
\hline
1&       1&        1       &    \\   
  2&       10&        11       &2.07678354372  \\     
  3&       11&        12       &2.92280567895    \\   
  4&       98&        107       &2.02491750642     \\  
  5&       109&        119       &2.93949885584      \\ 
  6&       9690&        10579       & 2.16337045430    \\  
  7&       38869&        42435       &2.07073791493      \\ 
  8&       48559&        53014       & 2.06103252758      \\
  9&       87428&        95449       &2.17687224408       \\
  10&       660555&        721157       & 2.23191137105     \\ 
  11&       14619638&        15960903       &2.02341311597    \\   
  12&       15280193&        16682060       & 2.07153881185     \\ 
  13&       45180024&        49325023       & 2.06867467056      \\
  14&       150820265&        164657129       & 2.17463739738      \\
  15&       3966506914&        4330410377       &  2.00518561271     \\
  16&       4117327179&        4495067506       & 2.11498233047      \\
  17&       49257105883&        53776152943       & 2.00653757661      \\
  18&       53374433062&        58271220449       &2.09965407158       \\
  19&       583001436503&        636488357433       &2.00668821796       \\
  20&       636375869565&        694759577882       & 2.08514590463      \\
  21&       6310384262588&        6889324558371       &2.04442502885       \\
  22&       19567528657329&        21362733252995       &  2.02162846316     \\
  23&       25877912919917&        28252057811366       & 2.02785664628      \\
  24&       45445441577246&        49614791064361       & 2.02499038253      \\
  25&       71323354497163&        77866848875727       & 2.02500515325      \\
  26&       116768796074409&        127481639940088       & 2.02579968126    \\
  27&       188092150571572&        205348488815815       & 2.02174828551    \\
  28&       304860946645981&        332830128755903       &2.03244636822     \\
  29&       797814043863534&        871008746327621       &2.03473350242    \\
  30&       1900489034373049&        2074847621411145       & 2.01160242732  \\
  31&       2698303078236583&        2945856367738766       &2.07058476506  \\
  32&       31581822894975462&        34479267666537571   & 2.01265439798   \\
  33&       34280125973212045&        37425124034276337     & 2.02693536878 \\
  34&       65861948868187507&   71904391700813908    &2.01313249833       \\
  35&       100142074841399552&        109329515735090245   &2.05164524679  \\
  36&       666714397916584819&        727881486111355378    &2.00499959412\\
  37&       766856472757984371&        837211001846445623   &2.06337796886 \\
  38&       9868992071012397271&     10774413508268702854   &2.00633061958\\ 
  39&       10635848543770381642&    11611624510115148477  &2.03693255654  \\
  40&       52412386246093923839&  57220911548729296762    &2.04514224382   \\
  41&       377522552266427848515&   412158005351220225811  &2.01265617595 \\
  42&       429934938512521772354&  469378916899949522573   &2.01784883868 \\
  43&       807457490778949620869&  881536922251169748384   &2.02062399817 \\
  44&       2044849920070421014092&  2232452761402289019341 & 2.03721810851\\
  45&       11031707091131054691329& 12043800729262614845089 & 2.00635311938\\ 
\end{tabular}
\end{table}

\currfilename.\\

\end{document}